\title[On the Galois-invariant part of the Weyl group]{On the Galois-invariant part of the Weyl group of the Picard lattice \\ of a K3 surface}
\author{Wim Nijgh \& Ronald van Luijk}
\date{\today}
\begin{document}

\begin{abstract}
    Let $X$ denote a K3 surface over an arbitrary field~$k$. Let $k^\text{s}$ denote a separable closure of~$k$ and let~$X^\text{s}$ denote the base change of~$X$ to~$k^\text{s}$. 
    The action of the absolute Galois group $\Gal(k^\text{s}/k)$ of~$k$ on~$\Pic X^\text{s}$ respects the intersection pairing, which gives $\Pic X^\text{s}$ the structure of a lattice.
    Let~$\tO(\Pic X)$ and~$\tO(\Pic X^\text{s})$ denote the group of isometries of~$\Pic X$ and~$\Pic X^\text{s}$, respectively. 
    Let~$R_X$ denote the Galois invariant part of the Weyl group of~$\tO(\Pic X^\text{s})$. One can show that each element in~$R_X$ can be restricted to an element of~$\tO(\Pic X)$. The following question arises: 
    \emph{Is the image of the restriction map $R_X \to \tO(\Pic X)$ a normal subgroup of~$\tO(\Pic X)$ for every K3 surface~$X$?} 
    We show that the answer is negative by giving counterexamples over~$k=\IQ$.
\end{abstract}

\maketitle

\section{Introduction}
In the last decades, K3 surfaces and their automorphisms have been widely studied. Recently, Bright e.a., \cite{BLL20}, generalized some finiteness theorems for K3 surfaces over algebraically closed fields of characteristic different from~2 to arbitrary fields, still of characteristic different from 2. One of the results they generalized is a well-known theorem about the automorphisms of K3 surfaces. To state this result and the main question of interest in this paper, we will start by introducing some definitions.

By a K3 surface over a field~$k$, we mean a smooth projective geometrically integral variety~$X$ over~$k$ of dimension~2 of which the canonical bundle is trivial and the first cohomology group $H^1(X,\cO_X)$ of the structure sheaf vanishes. 

For the remainder of this section, let $X$ denote a K3 surface over a field~$k$. It is known that the Picard group $\Pic X$ is a free abelian group of rank at most~22. We will denote the rank of $\Pic X$ by $\rho(X)$. The intersection pairing makes this group into a lattice as defined in~\autoref{s:genex}. We will denote this intersection pairing with $(\;.\;)$. The \emph{self-intersection number} of a class $D\in \Pic X$ is defined as~$D^2:=(D.D)$.

An isometry of $\Pic X$ is an automorphism of $\Pic X$ that respects the intersection pairing. We denote the group of isometries of $\Pic X$ by $\tO(\Pic X)$.

For every $(-2)$-class $C\in \Pic X$, i.e., a class with $C^2=2$, we define the reflection 
    $$\sigma_C\colon \Pic X\to \Pic X, \hspace{0.5cm} D \mapsto D+(C.D)C.$$

The Weyl group $\tW(\Pic X)$ is the group generated by all these reflections, i.e.,
\begin{equation*}
    \tW(\Pic X):=\langle \sigma_C \mid C\in \Pic X \text{ with } C^2=-2\rangle.
\end{equation*}

A direct verification shows that these reflections are isometries and one can show, which we will do in~\autoref{s:genex}, that $\tW(\Pic X)$ is a normal subgroup in $\tO(\Pic X)$.

In the algebraically closed case, whether or not the automorphism group of a K3 surface is finite, completely depends on the Picard lattice. This follows from the following theorem.

\begin{theorem}\label{thm:finresults} \textnormal{(\cite[Theorem 15.2.6]{Huy16}, \cite[Proposition 5.2]{LiM11})}
    Suppose $X$ is a K3 surface over an algebraically closed field of characteristic not equal to $2$.
    Then the natural map $$\Aut X\to \tO(\Pic X)/\tW(\Pic X)$$ has finite kernel and image of finite index.
\end{theorem}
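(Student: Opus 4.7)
The plan is to study the natural map $\rho\colon \Aut X \to \tO(\Pic X)/\tW(\Pic X)$ by exploiting the chamber decomposition of the positive cone of $\Pic X \otimes_{\IZ} \IR$, reducing both finiteness claims to standard facts about K3 surfaces plus one deep input from Hodge theory (or its characteristic-$p$ replacement).

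\emph{Setup.} Let $\cC$ be the connected component of $\{x\in \Pic X \otimes \IR : x^2>0\}$ containing the ample classes. A Riemann--Roch argument on $X$ shows that for every $(-2)$-class $C\in\Pic X$ the hyperplane $C^\perp$ is a wall of $\cC$, and that the connected components of $\cC\setminus \bigcup_{C^2=-2} C^\perp$ are chambers on which $\tW(\Pic X)$ acts simply transitively; the ample cone $\mathrm{Amp}(X)$ is one such chamber. Any $\phi\in \Aut X$ sends ample classes to ample classes, so $\phi^*$ lies in the stabiliser $S\subset \tO(\Pic X)$ of $\mathrm{Amp}(X)$. Since $\tW(\Pic X)$ acts freely on chambers, $S\cap \tW(\Pic X)=\{1\}$, so the composition $\Aut X \to S \hookrightarrow \tO(\Pic X) \to \tO(\Pic X)/\tW(\Pic X)$ identifies $\rho$ with $\Aut X \to S$ followed by the inclusion $S\hookrightarrow \tO(\Pic X)/\tW(\Pic X)$.

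\emph{Kernel.} The identification above shows $\ker \rho = \ker(\Aut X \to \tO(\Pic X))$. Fixing an ample $L\in \Pic X$, we have $\ker \rho \subseteq \Aut(X,L)$. The polarised automorphism group scheme is of finite type, as it embeds into $\mathrm{PGL}$ acting on $H^0(X,L^{\otimes n})$ for $n\gg 0$, and its Lie algebra injects into $H^0(X,T_X)$, which vanishes on any K3 surface; hence $\Aut(X,L)$ is finite, and so is $\ker\rho$.

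\emph{Image.} By the identification above it suffices to show that $[S : \phi^*(\Aut X)]$ is finite. This is the deep step and the main obstacle: it is not formal, since both $S$ and $\phi^*(\Aut X)$ can be infinite. In characteristic zero it follows from the global Torelli theorem combined with Sterk's theorem---a form of the Morrison--Kawamata cone conjecture for K3 surfaces---which asserts that $\Aut X$ acts on $\mathrm{Nef}(X)$ with a rational polyhedral fundamental domain; a standard argument then bounds the index in $S$ by the number of chambers of $\mathrm{Amp}(X)$ contained in this domain. In positive characteristic $\neq 2$ the Hodge-theoretic input is replaced by the crystalline Torelli-type theorems of Lieblich--Maulik and Madapusi Pera, which is precisely why the hypothesis on the characteristic is needed.
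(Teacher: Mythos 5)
The paper does not actually prove this theorem: it is quoted verbatim from \cite[Theorem 15.2.6]{Huy16} and \cite[Proposition 5.2]{LiM11}, so there is no in-paper argument to compare yours against. Your sketch is the standard proof underlying those references, and as an outline it is essentially correct: the chamber structure on the positive cone, the identification of $\Aut X$ with a subgroup of the stabiliser $S$ of the ample chamber, the kernel argument via $H^0(X,T_X)=0$, and the delegation of the finite-index statement to Torelli-type theorems is exactly how the cited sources proceed. Two points deserve attention. First, the inclusion $S\hookrightarrow \tO(\Pic X)/\tW(\Pic X)$ is not surjective: transitivity of $\tW(\Pic X)$ on the chambers only yields $\tO^+(\Pic X)=\tW(\Pic X)\rtimes S$ for the index-two subgroup $\tO^+(\Pic X)$ of isometries preserving the positive cone, so the image of $S$ has index two in $\tO(\Pic X)/\tW(\Pic X)$. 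This is harmless, but as written you reduce the image statement to $[S:\phi^*(\Aut X)]<\infty$ without recording that $S$ itself maps onto a finite-index subgroup of the target; that one line should be added. Second, in positive characteristic the vanishing $H^0(X,T_X)=0$ used for finiteness of the kernel is itself a nontrivial theorem of Rudakov and Shafarevich and should be cited rather than asserted as if it were formal. The genuinely hard step --- that the image of $\Aut X$ has finite index in $S$ --- you correctly isolate and attribute to the global Torelli theorem together with Sterk's theorem in characteristic zero, and to the results of Lieblich--Maulik and Madapusi Pera in positive characteristic; this is also where the hypothesis on the characteristic enters, consistent with the paper's citations.
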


In \cite{BLL20}, the result of \autoref{thm:finresults} is generalized to arbitrary fields. First of all, by~\cite[Lemma 3.1]{BLL20}, the statement of \autoref{thm:finresults} also holds for separably closed fields.
For arbitrary fields, a modification of the statement had to be made. 

Let~$k^\text{s}$ denote a separable closure of $k$ and let $\Gal(k^\text{s}/k)$ denote the absolute Galois group.
Denote by~$X^\text{s}$ the base change of~$X$ to~$k^\text{s}$. The group~$\Gal(k^\text{s}/k)$ acts on~$\Pic X^\text{s}$ through~$\tO(\Pic X^\text{s})$. 
Therefore, we get an induced action by conjugation of $\Gal(k^\text{s}/k)$ on the normal subgroup~$\tW(\Pic X^\text{s})$ of~$\tO(\Pic X^\text{s})$. We define the group~$R_X$ as the Galois invariant part of~$\tW(\Pic X^\text{s})$, i.e., 
$$R_X=\tW(\Pic X^\text{s})^{\Gal(k^\text{s}/k)}.$$

The group $R_X$ acts on the Galois invariant part $(\Pic X^\text{s})^{\Gal(k^\text{s}/k)}$. 
In \cite[Proposition~3.6(iii)]{BLL20}, it is proven that the action of $\tR_X$ on $\Pic X^\text{s}$ even preserves $\Pic X$. This means that we have a map $R_X\to \tO(\Pic X)$ given by restriction.
The question that has been raised by the authors of \cite{BLL20} is the following.
\begin{question}\label{question}
    \textit{Is the image of the restriction map $R_X\to \tO(\Pic X)$ a normal subgroup in $\tO(\Pic X)$ for each K3 surface $X$?} 
\end{question}
They note there is no reason to believe that the answer to this question is yes, although an example in which the image of $\tR_X$ is not normal is not found by the authors. 

To get an analogous statement of~\autoref{thm:finresults}, they observe that the natural map $\Aut X\to \tO(\Pic X^\text{s})$ induces an action of~$\Aut X$ on~$\tO(\Pic X^\text{s})$ by conjugation. Because $\tW(\Pic X^\text{s})$ is normal in $\tO(\Pic X^\text{s})$ and conjugation with an element of $\Aut X$ commutes with the Galois action, $\Aut X$ fixes the group~$R_X$. Hence, this gives an action of~$\Aut X$ on~$R_X$. 
The generalization of \autoref{thm:finresults} over arbitrary fields is as follows.
\begin{proposition}\textnormal{(\cite[Proposition 3.10]{BLL20})}
    Let $X$ be a K3 surface over a field~$k$ of characteristic different from~$2$. Then the natural map 
    \[\Aut X \ltimes \tR_X \to \tO(\Pic X)\]
    has finite kernel and image of finite index.
\end{proposition}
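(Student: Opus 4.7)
The plan is to deduce the statement from \autoref{thm:finresults} applied to~$X^{\text{s}}$, by taking Galois invariants on both sides and then comparing the resulting groups with $\tO(\Pic X)$. Throughout, write $G = \Gal(k^{\text{s}}/k)$, $\tO^{\text{s}} = \tO(\Pic X^{\text{s}})$, and $\tW^{\text{s}} = \tW(\Pic X^{\text{s}})$; the hypothesis on the characteristic enters only when invoking \autoref{thm:finresults} for~$X^{\text{s}}$, which yields a map $\rho_{\text{s}} \colon \Aut X^{\text{s}} \to \tO^{\text{s}}/\tW^{\text{s}}$ with finite kernel and image of finite index.

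Taking $G$-invariants of $1 \to \tW^{\text{s}} \to \tO^{\text{s}} \to \tO^{\text{s}}/\tW^{\text{s}} \to 1$ gives an embedding $(\tO^{\text{s}})^{G}/R_X \hookrightarrow (\tO^{\text{s}}/\tW^{\text{s}})^{G}$. By Galois descent for proper varieties $\Aut X = (\Aut X^{\text{s}})^{G}$, so $\rho_{\text{s}}$ restricts to a map $\rho \colon \Aut X \to (\tO^{\text{s}}/\tW^{\text{s}})^{G}$. Two observations combine: the intersection $\rho_{\text{s}}(\Aut X^{\text{s}}) \cap (\tO^{\text{s}}/\tW^{\text{s}})^{G}$ has finite index in $(\tO^{\text{s}}/\tW^{\text{s}})^{G}$, since $\rho_{\text{s}}(\Aut X^{\text{s}})$ has finite index in $\tO^{\text{s}}/\tW^{\text{s}}$; and $\rho(\Aut X)$ has finite index in this intersection, the obstruction being controlled by $H^{1}$ of the finite quotient of $G$ acting through $\tO^{\text{s}}$ with values in the finite kernel of $\rho_{\text{s}}$, hence finite. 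It follows that $\rho(\Aut X) \cdot R_X$ has finite index in $(\tO^{\text{s}})^{G}$. Moreover, the kernel of $\Aut X \ltimes R_X \to (\tO^{\text{s}})^{G}$ is finite: if $\alpha \cdot w$ acts trivially then $\alpha \in \tW^{\text{s}}$, so $\rho_{\text{s}}(\alpha) = 1$ and $\alpha$ lies in the finite kernel of~$\rho_{\text{s}}$.

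Finally I would descend to $\tO(\Pic X)$. For a K3 surface the inclusion $\Pic X \hookrightarrow (\Pic X^{\text{s}})^{G}$ has finite cokernel, so $\tO(\Pic X)$ and $\tO((\Pic X^{\text{s}})^{G})$ are commensurable subgroups of $\tO\bigl((\Pic X^{\text{s}})^{G} \otimes \IQ\bigr)$: their common stabiliser of both lattices has finite index in each, and any isometry trivial on a finite-index sublattice is trivial throughout. Both $\Aut X$ and $R_X$ stabilise $\Pic X$—the latter by~\cite[Proposition~3.6(iii)]{BLL20}, the former tautologically—so the composite $\Aut X \ltimes R_X \to (\tO^{\text{s}})^{G} \to \tO(\Pic X)$ is well defined, and the finite-kernel and finite-index properties propagate along the chain. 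The principal technical difficulty is the careful bookkeeping of these several commensurability statements—images of $\rho_{\text{s}}$ and~$\rho$ inside the various invariant subgroups, and the lattice comparison between~$\Pic X$ and $(\Pic X^{\text{s}})^{G}$—into a single finite-kernel/finite-index conclusion for $\Aut X \ltimes R_X \to \tO(\Pic X)$.
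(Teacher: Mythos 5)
The first thing to note is that the paper does not prove this proposition: it is quoted from \cite[Proposition 3.10]{BLL20}, and the text explicitly refers the reader to that paper for the proof. So your argument can only be judged on its own merits, and there it has a genuine gap. The critical step is your claim that $\rho(\Aut X)$ has finite index in $\rho_{\text{s}}(\Aut X^{\text{s}})\cap(\tO^{\text{s}}/\tW^{\text{s}})^{G}$ because the obstruction is ``$H^1$ of the finite quotient of $G$ acting through $\tO^{\text{s}}$ with values in the finite kernel of $\rho_{\text{s}}$, hence finite.'' The non-abelian cohomology sequence identifies the coset space of $\rho_{\text{s}}((\Aut X^{\text{s}})^{G})$ inside that intersection with a subset of $H^1(G,K)$ for the \emph{full profinite} group $G=\Gal(k^{\text{s}}/k)$, not of a finite quotient: if $e\in\Aut X^{\text{s}}$ lifts a $G$-invariant class, the cocycle $g\mapsto e\iv\,{}^{g}e$ vanishes only on an open subgroup determined by the field of definition of $e$, which is not bounded by the finite extension over which $\Pic X^{\text{s}}$ is defined. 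And $H^1(G,K)$ is typically infinite for finite $K$ with continuous action --- already $H^1(\Gal(\overline{\IQ}/\IQ),\IZ/2\IZ)\cong \IQ^{\times}/(\IQ^{\times})^{2}$. So ``hence finite'' does not follow. The index is indeed finite, but establishing this requires non-formal input (for instance the faithfulness of $\Aut X^{\text{s}}$ on the full second cohomology, so that an automorphism whose induced action is Galois-equivariant is itself Galois-invariant and descends), which is presumably why \cite{BLL20} argues through the chamber structure of the positive cone rather than by taking invariants of the geometric statement.

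A secondary, smaller gap is the final ``propagation along the chain.'' Beyond the commensurability of $\tO(\Pic X)$ and $\tO((\Pic X^{\text{s}})^{G})$, you need the restriction map from the relevant subgroup of $(\tO^{\text{s}})^{G}$ to $\tO((\Pic X^{\text{s}})^{G})$ to have image of finite index; this is exactly the kind of statement the present paper warns about, since not every isometry of the invariant lattice extends to $\Pic X^{\text{s}}$. It does hold, but via the discriminant-group argument: an isometry of $(\Pic X^{\text{s}})^{G}$ acting trivially on its discriminant group extends, by the identity on the orthogonal complement, to a Galois-equivariant isometry of $\Pic X^{\text{s}}$, and such isometries form a finite-index subgroup. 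The kernel bookkeeping, by contrast, is fine: an isometry trivial on $(\Pic X^{\text{s}})^{G}$ is determined by its action on a negative-definite lattice, whose isometry group is finite, and your identification of the kernel of $\Aut X\ltimes R_X\to(\tO^{\text{s}})^{G}$ with a subgroup of the finite kernel of $\rho_{\text{s}}$ is correct.
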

For more details and a proof of this proposition, we refer to the original paper.

In this paper, we show that the answer to~\autoref{question} is negative. We will do this in two steps. 
First, we state a more general question concerning only lattices, for which we give a counterexample. 
Then, we construct K3 surfaces for which the Picard lattice and the Galois invariant part of the Weyl group will coincide with this counterexample.

Computations for the example in \autoref{example} are done in \textsc{Magma} (\cite{magma}). The repository can be found at \href{
https://github.com/wimnijgh/K3-and-Weyl-group}{https://github.com/wimnijgh/K3-and-Weyl-group}.

\section{A non-normal subgroup in the group of isometries of a lattice}\label{s:genex}

A \emph{lattice} is a finitely generated free $\IZ$-module $\Lambda$ together with a symmetric non-degenerate bilinear form $b\colon \Lambda\times\Lambda\to \IZ$. We use the notation $x.y:=b(x,y)$ and $x^2:=b(x,x)$ for this bilinear form. If $x^2$ is even for all $x\in \Lambda$, we call the lattice \emph{even}.

If one chooses a basis $(x_1,\dots,x_n)$ for a lattice, the bilinear form can be represented by the Gram matrix 
\[\begin{pmatrix}
    x_1^2 & \cdots & x_1.x_n \\
    \vdots & \ddots & \vdots \\
    x_n.x_1 & \cdots & x_n^2
\end{pmatrix}.\]
The \emph{discriminant} of a lattice $\Lambda$, denoted $\disc \Lambda$, is given by the determinant of any Gram matrix representing the bilinear form. If $\Lambda'$ is a sublattice of $\Lambda$ of the same rank, then the relation
$$\disc \Lambda'=\disc \Lambda \cdot [\Lambda:\Lambda']^2,$$
holds for the discriminants, where $[\Lambda:\Lambda']$ denotes the index of~$\Lambda'$ in~$\Lambda$. This means that the discriminants of a lattice and a sublattice of full rank always differ by a square factor.

The group of \emph{isometries} of a lattice~$\Lambda$, denoted~$\tO(\Lambda)$, is the group of automorphisms of~$\Lambda$ that respect the bilinear form.

Next, we define the Weyl group of a lattice similarly to what we did in the introduction for the Weyl group of the Picard group. 
Our definition of the Weyl group will differ from some definitions in other literature. 
    Let $\Lambda$ be a lattice. Let $w\in \Lambda$ be an element such that $w^2=-2$. Then we define the \emph{reflection}
\begin{equation*}
    \sigma_w\colon \Lambda\to \Lambda, \hspace{0.5cm} x \mapsto x+(x.w)w.
\end{equation*}
We have that $\sigma_w$ is an isometry that satisfies $\sigma_w^2=\id$, hence the name reflection. 
The \textit{Weyl group} of $\Lambda$, denoted $\tW(\Lambda)$, is the group generated by all these reflections, i.e.,
\begin{equation*}
    \tW(\Lambda)=\langle \sigma_w \mid w\in \Lambda \text{ with } w^2=-2\rangle\subset \tO(\Lambda).
\end{equation*}
Observe that for $w\in \Lambda$ with $w^2=-2$, we have the relation
$\tau\circ\sigma_w=\sigma_{\tau(w)}\circ \tau$ 
for every $\tau\in \tO(\Lambda)$. It follows that the group $\tW(\Lambda)$ is normal in~$\tO(\Lambda)$. 

Next, let $G\subset \tO(\Lambda)$ be a subgroup such that $$\Lambda^G=\{x\in \Lambda \mid g(x)=x \text{ for all } g\in G\}$$ is a lattice, that is, the restriction of the bilinear form to $\Lambda^G$ is non-degenerate. 
The assumption that the restriction of the bilinear form to~$\Lambda^G$ is non-degenerate is a non-trivial condition as is shown in the next example.

\begin{example}
    Suppose that $\Lambda$ is a lattice of rank $\rho(\Lambda)=3$ for which a Gram matrix with respect to a basis $(x,y,z)$ can be given by
    \begin{align*}
        \begin{pmatrix}
            0 & 0 & 2 \\
            0 & -2 & 1 \\
            2 & 1 & 0
        \end{pmatrix}.
    \end{align*}
    Set $G:=\langle \sigma\rangle$, where $\sigma\in\tO(\Lambda)$ is the isometry given by $\sigma(x)= x$, $\sigma(y)=x+y$ and $\sigma(z)=y+z$. Then one easily checks that $\Lambda^G=\langle x\rangle$ and that the bilinear form is degenerate on $\Lambda^G$. 
\end{example}

Note that in this example, we have chosen $G$ to be infinite. The reason becomes clear due to the following result.

\begin{lemma}\label{lem:fin}
    Suppose that $\Lambda$ is a lattice and $G\subset \tO(\Lambda)$ is a finite group. Then the restriction of the bilinear form of $\Lambda$ to $\Lambda^G$ makes $\Lambda^G$ a lattice.
\end{lemma}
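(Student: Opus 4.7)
The plan is to use the standard averaging trick, which is available precisely because $|G|$ is finite and hence invertible in $\IQ$.

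First, I would extend the bilinear form $b$ to the rational vector space $\Lambda_\IQ := \Lambda \otimes_\IZ \IQ$ by $\IQ$-bilinearity; since $b$ is non-degenerate on $\Lambda$, the extension is non-degenerate on $\Lambda_\IQ$. The $G$-action extends as well, and one checks the easy identity $\Lambda_\IQ^G = \Lambda^G \otimes_\IZ \IQ$, so that $\Lambda^G$ has finite rank and is in particular finitely generated and free as a subgroup of $\Lambda$. All that remains is to show that the restriction of $b$ to $\Lambda^G$ is non-degenerate.

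Next, I would introduce the averaging projection
\[
\pi\colon \Lambda_\IQ \longrightarrow \Lambda_\IQ^G, \qquad \pi(y) = \frac{1}{|G|}\sum_{g \in G} g(y),
\]
which is well-defined because $G$ is finite, and which fixes $\Lambda_\IQ^G$ pointwise. The key computation is that for $x \in \Lambda^G$ and $y \in \Lambda$, the $G$-invariance of $b$ (each $g \in G$ is an isometry) gives
\[
b(x,y) \;=\; \frac{1}{|G|}\sum_{g \in G} b\bigl(g(x),y\bigr) \;=\; \frac{1}{|G|}\sum_{g \in G} b\bigl(x, g^{-1}(y)\bigr) \;=\; b\bigl(x,\pi(y)\bigr).
\]

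Now suppose $x \in \Lambda^G$ lies in the radical of the restricted form, i.e., $b(x, x') = 0$ for all $x' \in \Lambda^G$. By $\IQ$-bilinearity this extends to $b(x, v) = 0$ for all $v \in \Lambda_\IQ^G$, and in particular $b(x, \pi(y)) = 0$ for every $y \in \Lambda$. Combined with the identity above, this yields $b(x,y) = 0$ for all $y \in \Lambda$. Since $b$ is non-degenerate on $\Lambda$, we conclude $x = 0$. Hence the restricted form on $\Lambda^G$ is non-degenerate, and $\Lambda^G$ is a lattice.

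The whole argument is essentially one line once the projection $\pi$ is in hand; the only ``obstacle'' is recognising that non-degeneracy on the sublattice has to be tested via pairing against all of $\Lambda$, and that this test can be pulled back to the invariant part using that $\pi$ is both an isometric-style adjoint for elements of $\Lambda^G$ and lands in $\Lambda_\IQ^G$. The finiteness of $G$ enters exactly once, in dividing by $|G|$, which is why the preceding example (with $G$ infinite cyclic) is not covered.
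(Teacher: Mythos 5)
Your proof is correct and uses essentially the same averaging idea as the paper: the paper takes $z:=\sum_{g\in G} g(w)\in\Lambda^G$ and computes $v.z=\#G\cdot(v.w)\neq 0$ directly, which is your argument with the unnormalized sum and therefore never needs to tensor with $\IQ$. The detour through $\Lambda_\IQ$ and the projection $\pi$ is harmless but unnecessary.
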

\begin{proof}
    To show that $\Lambda^G$ is a lattice, we show that the bilinear form of~$\Lambda$ restricted to~$\Lambda^G$ is non-degenerate by showing that for any $v\in \Lambda^G$ with~$v\neq 0$ there is an element $z\in \Lambda^G$ with $(v.z)\neq 0$.
    Let $v\in \Lambda^G$ be a non-zero element. Then there is an element $w\in \Lambda$ with $v.w\neq 0$. It follows that $z:=\sum_{g\in G} gw$ is~$G$-invariant and $v.z=\#G\cdot (v.w)\neq 0$. 
\end{proof}

Define the subgroup
\begin{equation*}
    \tO(\Lambda,\Lambda^G):=\{\sigma\in \tO(\Lambda) \mid \sigma(\Lambda^G)=\Lambda^G\}
\end{equation*}
of $\tO(\Lambda)$. Observe that there is a restriction map 
$$\res|_{\Lambda^G}\colon \tO(\Lambda,\Lambda^G)\to \tO(\Lambda^G), \hspace{0.5cm} \sigma\mapsto \sigma|_{\Lambda^G}.$$ 
Now let $N\subset \tO(\Lambda)$ be a normal subgroup of the group of isometries of $\Lambda$. Then the group~$G$ acts on~$N$ by conjugation and we denote by~$N^G$ the group of elements that are stable under this action, i.e., $$N^G=\{\sigma \in N\mid g\circ \sigma \circ g\iv=\sigma \text{ for all } g\in G\}.$$ 
For every $x\in \Lambda^G$, every $\sigma\in N^G$ and every $g\in G$, we have that $$\sigma(x)=(g\sigma g\iv)(x)=g(\sigma(x))$$ and so $\sigma(x)\in \Lambda^G$. It follows that $N^G$ is contained in the subgroup $\tO(\Lambda,\Lambda^G)$.

Recall that $\tW(\Lambda)$ is a normal subgroup of $\tO(\Lambda)$. By the above observations, we can look at the image of the group~$\tW(\Lambda)^G$ under the restriction map $\res|_{\Lambda^G}$. The image of the group~$\tW(\Lambda)^G$ is normal in the image of the restriction map, but in general, the image of~$\tW(\Lambda)^G$ does not have to be normal in $\tO(\Lambda^G)$ under this restriction map. We will give an example of this in \autoref{lem:notnormal}. 

\begin{lemma}\label{lem:notnormal}
    Let $\Lambda$ be a lattice of which the bilinear map is given by the Gram matrix
    $$\begin{pmatrix} 
        -4 & 3 & 3 \\
        3 & -2 & 0 \\
        3 & 0 & -2 
    \end{pmatrix},$$
    
    with respect to a basis $(x,y,z)$.
    Let $\tau\colon \Lambda\to \Lambda$ be the isometry switching~$y$ and~$z$, i.e., $\tau(x)=x$, $\tau(y)=z$ and $\tau(z)=y$. Set $G=\langle\tau\rangle$.
    Then $\Lambda^G$ is a lattice, which is generated by $(x,y+z)$. Moreover, the image of~$\tW(\Lambda)^G$ under the restriction map $\res|_{\Lambda^G}\colon \tO(\Lambda,\Lambda^G)\to \tO(\Lambda^G)$ is not normal in the group~$\tO(\Lambda^G)$.
\end{lemma}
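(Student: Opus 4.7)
The plan is to exhibit an explicit element in the image of $\tW(\Lambda)^G$ under restriction, and a specific isometry of $\Lambda^G$ that conjugates it outside the image. First, a vector $ax+by+cz \in \Lambda$ is $\tau$-fixed iff $b=c$, so $\Lambda^G$ has basis $(x,y+z)$ with Gram matrix $\begin{pmatrix}-4 & 6 \\ 6 & -4\end{pmatrix}$, of nonzero discriminant $-20$, confirming it is a lattice. Since $y^2=z^2=-2$ and $y.z=0$, the reflections $\sigma_y$ and $\sigma_z$ commute; because $\tau$ swaps them, commutativity forces $\sigma_y\sigma_z$ to be $G$-invariant, so it lies in $\tW(\Lambda)^G$. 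A direct computation shows that its restriction $\sigma$ to $\Lambda^G$ sends $x \mapsto x+3(y+z)$ and $y+z \mapsto -(y+z)$, and coincides with the reflection $\sigma_{y+z}$ of $\Lambda^G$ through the $(-4)$-vector $y+z$ (via the usual formula $w \mapsto w - 2(v.w)v/v^2$).

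Next I would produce a conjugator. Since $x^2 = (y+z)^2 = -4$, the involution $\phi \in \tO(\Lambda^G)$ swapping $x$ and $y+z$ is an isometry of $\Lambda^G$. Conjugating gives
\begin{equation*}
\phi\sigma\phi^{-1} = \sigma_{\phi(y+z)} = \sigma_x,
\end{equation*}
the reflection of $\Lambda^G$ through $x$, which sends $x \mapsto -x$ and $y+z \mapsto 3x+(y+z)$.

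The principal obstacle is to show that $\sigma_x$ is not in the image of the restriction map. I would prove a stronger statement: $\sigma_x$ is not even in the image of $\tO(\Lambda,\Lambda^G) \to \tO(\Lambda^G)$. The orthogonal complement of $\Lambda^G$ inside $\Lambda$ equals $\IZ(y-z)$, since $v.x = v.(y+z) = 0$ forces $a=0$ and $b+c=0$ for $v = ax+by+cz \in \Lambda$. Hence any $\beta \in \tO(\Lambda,\Lambda^G)$ preserves $\IZ(y-z)$, acting there by some sign $\epsilon \in \{\pm 1\}$. Suppose $\beta|_{\Lambda^G} = \sigma_x$; using $2y = (y+z)+(y-z)$ I compute
\begin{equation*}
2\beta(y) = \beta(y+z) + \epsilon(y-z) = 3x + (1+\epsilon)y + (1-\epsilon)z,
\end{equation*}
whose $x$-coefficient $3$ is odd, so $\beta(y) \notin \Lambda$ --- contradiction. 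Therefore $\sigma_x = \phi\sigma\phi^{-1}$ lies outside the image while $\sigma$ lies inside, so the image of $\tW(\Lambda)^G$ is not normal in $\tO(\Lambda^G)$.
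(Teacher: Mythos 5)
Your proof is correct and follows essentially the same route as the paper: the same element $\sigma=\sigma_y\sigma_z\in\tW(\Lambda)^G$, the same conjugating involution swapping $x$ and $y+z$, and a parity obstruction showing the conjugate does not extend to an isometry of $\Lambda$. Your final step merely repackages the paper's coefficient computation via the orthogonal complement $\IZ(y-z)$ and the identity $2y=(y+z)+(y-z)$, which is a clean (and slightly stronger) way to get the same contradiction.
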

\begin{proof}
    First observe that for $w:=ax+by+cz$, we have $\tau(w)=ax+cy+bz$. So~$\tau(w)=w$ if and only if $b=c$. It follows that the subgroup $\Lambda^G$ is generated by $x$ and~$d:=y+z$.

    With respect to the basis $(x,d)$, the Gram matrix of the bilinear map restricted to $\Lambda^G$ is given by
    $$\begin{pmatrix} 
        -4 & 6 \\
        6 & -4 
    \end{pmatrix}.$$    
    Observe that the bilinear map restricts to a non-degenerate bilinear map on~$\Lambda^G$ and hence~$\Lambda^G$ is a lattice, as can also be deduced from \autoref{lem:fin}.
    
    Now we define two isometries. First, we define $\sigma\in \tW(\Lambda)$ as $\sigma:=\sigma_{z}\circ \sigma_{y}$. 
    For~$v\in \Lambda$ we have that 
    $$\sigma(v)=\sigma_{z}(v+(v.y)y)=v+(v.y)y+((v+(v.y)y).z)z=v+(v.y)y+(v.z)z,$$ because $y.z=0$.
    It follows by symmetry that $\sigma=\sigma_{y}\circ \sigma_{z}$. 
    We also observe that~$\tau\circ\sigma_y\circ \tau\iv=\sigma_z$ and $\tau\circ \sigma_z\circ \tau\iv=\sigma_y$. From this we deduce that $$\tau\circ \sigma\circ \tau\iv=\tau \circ \sigma_y \circ \tau\iv \circ \tau \circ \sigma_z \circ \tau\iv= \sigma_z\circ \sigma_y=\sigma$$ and so $\sigma\in \tW(\Lambda)^G$. 
    
    Next, we define the isometry $\alpha\colon \Lambda^G \to \Lambda^G$ by $\alpha(x)=d$ and $\alpha(d)=x$. Observe that $\alpha^2=\id$ and so $\alpha\iv=\alpha$.   
    We now show that the image of $\tW(\Lambda)^G$ under the restriction map $\res|_{\Lambda^G}$ is not a normal subgroup in~$\tO(\Lambda^G)$, by showing that~${}^\alpha \sigma:=\alpha\circ (\sigma|_{\Lambda^G})\circ \alpha\iv\in \tO(\Lambda^G)$ cannot be extended to an element in~$\tO(\Lambda)$ and hence cannot be the restriction of an element in~$\tW(\Lambda)^G$. 

    We observe that
    \begin{align*}
        ({}^\alpha \sigma)(x)&=\alpha(\sigma(\alpha\iv(x)))=\alpha(\sigma(d))=\alpha(-d)=-x;\\
        ({}^\alpha \sigma)(d)&=\alpha(\sigma(\alpha\iv(d)))=\alpha(\sigma(x))=\alpha\left(x+(x.y)d\right)=d+3x.
    \end{align*}
    If ${}^\alpha \sigma$ extended to an isometry $\Tilde{\sigma}$ of $\Lambda$, then $\Tilde{\sigma}(y)=kx+ly+mz$ for some $k,l,m\in \IZ$, where these integers need to satisfy the following:
    \begin{align*}
        x.y=3& \; \text{ and so }\; 3=\Tilde{\sigma}(x).\Tilde{\sigma}(y)= 4k - 3 ( l + m); \\
        d.y=-2& \; \text{ and so }\; {-2}=\Tilde{\sigma}(d).\Tilde{\sigma}(y)= -6k + 7 ( l + m ).
    \end{align*}
    The first equation implies that $l+m$ is odd, whereas the second implies that it has to be even. Hence, ${}^\alpha \sigma$ cannot be extended to an isometry of $\Lambda^G$. From the above, we deduce that the image of $\tW(\Lambda)^G$ under the restriction map $\res|_{\Lambda^G}$ is not normal in $\tO(\Lambda^G)$.
\end{proof}

\begin{remark}
    There are only two ways to extend the isometry~${}^\alpha \sigma$ of the proof of \autoref{lem:notnormal} to isometries of the vector space $\Lambda\otimes \IQ$. Namely, the isometries~$\sigma_1,\sigma_2\colon \Lambda\otimes \IQ\to \Lambda\otimes \IQ$ defined by
    \begin{align*}
        \sigma_1(x):=-x,\; \sigma_1(y)&:=\tfrac{3}{2}x+y\; \text{ and }\; \sigma_1(z):=\tfrac{3}{2}x+z; \\
        \sigma_2(x):=-x,\;\sigma_2(y)&:=\tfrac{3}{2}x+z\; \text{ and }\; \sigma_2(z):=\tfrac{3}{2}x+y.
    \end{align*}
\end{remark}

The situation of \autoref{lem:notnormal} relates to our situation in the following way. Let~$X$ be a K3 surface over some field $k$. As in the introduction, we let~$X^\text{s}$ denote the base change of~$X$ to a separable closure $k^\text{s}$.
Set $\Lambda:=\Pic X^\text{s}$. The absolute Galois group acts on the group $\Lambda$ and this action factors via the group~$\tO(\Lambda)$. 
Choose~$G$ as the image of the map $\Gal(k^\text{s}/k)\to \tO(\Lambda)$. Then by definition of~$\tR_X$, we have $\tR_X=\tW(\Lambda)^G$. 
In general, we have that $\Pic X\subset \Lambda^G$ is a sublattice of finite index. If the equality $\Pic X=\Lambda^G$ holds, which is for example the case if $X(k)$ is non-empty, then the restriction of $\res|_{\Lambda^G}$ to~$R_X$ is exactly the restriction map $R_X\to \tO(\Pic X)$ from the question in the introduction.

The counterexample constructed in \autoref{lem:notnormal} shows that there is no reason to believe that the image of $\tR_X$ is normal in $\Pic X$. Even more is true. The result of \autoref{lem:notnormal} can be applied to a certain type of quartic surface in~$\IP^3_k$, that is, a smooth surface in~$\IP^3_k$ that is defined as the zero set of a polynomial of degree~$4$.

\begin{proposition}\label{prp:notnormal}
    Let~$X$ be a smooth quartic surface in~$\IP^3_k$.
    Assume that~$(H,L_0,L_1)$ is a basis for the Picard group $\Pic X^\text{s}$, where~$H$ denotes the hyperplane section on~$X$, and~$L_0$ and~$L_1$ are the classes of two skew lines on~$X^\text{s}$ that are conjugate over some quadratic extension of~$k$.
    Then~$\Pic X$ equals~$(\Pic X^\text{s})^{\Gal(k^\text{s}/k)}$ and the image of~$\tR_X$ in~$\tO(\Pic X)$ is not normal in~$\tO(\Pic X)$.
\end{proposition}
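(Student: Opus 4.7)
The plan is to reduce \autoref{prp:notnormal} to \autoref{lem:notnormal} by matching the lattice data. Set $\Lambda := \Pic X^\text{s}$ and let $G \subset \tO(\Lambda)$ denote the image of $\Gal(k^\text{s}/k)$, so that $R_X = \tW(\Lambda)^G$ by definition. First I would compute the Gram matrix of the intersection pairing with respect to the basis $(H, L_0, L_1)$. Since $X$ is a smooth quartic in $\IP^3_k$, one has $H^2 = 4$; since $L_0$ and $L_1$ are smooth rational curves on a K3 surface, adjunction gives $L_i^2 = -2$; a line meets a general hyperplane transversally in a single point, so $H.L_i = 1$; and the skewness assumption yields $L_0.L_1 = 0$.

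Next I would exhibit a change of basis that turns this Gram matrix into the one appearing in \autoref{lem:notnormal}. A short calculation shows that the substitution $(x,y,z) := (H - L_0 - L_1,\, L_0,\, L_1)$ yields exactly the Gram matrix of the lemma. Under this identification, the Galois action -- which by hypothesis factors through a quadratic extension, fixes $H$, and interchanges $L_0$ and $L_1$ -- corresponds precisely to the involution $\tau$ of \autoref{lem:notnormal}. Hence $G = \langle \tau \rangle$, and the restriction map $R_X \to \tO(\Lambda^G)$ coincides with the map $\res|_{\Lambda^G}$ of the lemma.

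It remains to check that $\Pic X$ equals $\Lambda^G$. From \autoref{lem:notnormal} the sublattice $\Lambda^G$ is generated by $x$ and $y+z$, equivalently by $H$ and $L_0 + L_1$. Both classes come visibly from $k$-rational divisors: $H$ is the restriction of $\mathcal{O}_{\IP^3_k}(1)$, while $L_0 + L_1$ is the class of the Galois-stable effective divisor cut out by the union of the two conjugate lines. Consequently $\Pic X$ contains this rank-$2$ sublattice $\Lambda^G$; combined with the standard inclusion $\Pic X \subseteq \Lambda^G$, this forces equality. The conclusion that the image of $R_X$ in $\tO(\Pic X)$ is not normal is then immediate from \autoref{lem:notnormal}.

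The only potentially delicate point is the equality $\Pic X = \Lambda^G$, since in general the inclusion $\Pic X \hookrightarrow (\Pic X^\text{s})^{\Gal(k^\text{s}/k)}$ can be of positive index; here, however, both generators are manifestly $k$-rational, so the subtlety evaporates. Everything else is a routine change-of-basis computation chasing the given hypotheses through \autoref{lem:notnormal}.
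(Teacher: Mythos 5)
Your proposal is correct and follows essentially the same route as the paper: compute the Gram matrix in the basis $(H,L_0,L_1)$, pass to $(H-L_0-L_1,\,L_0,\,L_1)$ to match \autoref{lem:notnormal}, identify the Galois image with $G=\langle\tau\rangle$, and obtain $\Pic X=(\Pic X^\text{s})^{\Gal(k^\text{s}/k)}$ by observing that $H$ and the class of the $k$-rational divisor $L_0\cup L_1$ already generate $\Lambda^G$. The only difference is cosmetic: you spell out the adjunction and transversality computations and the $k$-rationality of the generators in slightly more detail than the paper does.
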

\begin{proof}
    By the above assumptions, with respect to the basis $(H,L_0,L_1)$, the intersection product on the Picard group $\Pic X^\text{s}$ is given by the matrix 
    $$\begin{pmatrix} 
        4 & 1 & 1 \\
        1 & -2 & 0 \\
        1 & 0 & -2 
    \end{pmatrix}.$$
    By choosing the basis $(D,L_0,L_1)$ with $D:=H-L_0-L_1$, we can deduce that this lattice is isomorphic to the lattice in \autoref{lem:notnormal} by the isomorphism 
    $$\Pic X^\text{s}\xrightarrow{\sim} \Lambda; \; D\mapsto x,\; L_0\mapsto y, \; L_1\mapsto z. $$
    Moreover, the image of the Galois group $\Gal(k^\text{s}/k)$ in~$\tO(\Pic X^\text{s})$ can be identified under this isomorphism with the group~$G$ of \autoref{lem:notnormal}. 

    By our assumption, $\Pic X$ contains the divisors $H$ and $L_0+L_1$, hence also~$D$. In combination with \autoref{lem:notnormal}, we find $$\langle D,L_0+L_1\rangle \subset\Pic X\subset (\Pic X^\text{s})^{\Gal(k^\text{s}/k)}=\langle D,L_0+L_1\rangle$$ and so the equality $\Pic X=(\Pic X^\text{s})^{\Gal(k^\text{s}/k)}$ follows. This means that we can identify $\Pic X$ with the lattice $\Lambda^G$ under the above isomorphism. It now follows as a corollary of \autoref{lem:notnormal} that the image of~$\tR_X$ in~$\tO(\Pic X)$ is not a normal subgroup.
\end{proof}

In the next section, we will construct explicit examples over $k=\IQ$ which satisfy the properties of the above proposition. But first, we prove one last fact about even lattices of odd rank, which gives us a useful result.
\begin{lemma}\label{lem:evendisc}
    Let $\Lambda$ be an even lattice. Suppose that the rank of $\Lambda$ is odd. Then the discriminant of~$\Lambda$ is even.
\end{lemma}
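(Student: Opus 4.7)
The plan is to reduce a Gram matrix modulo~$2$ and invoke the classical fact that every skew-symmetric integer matrix of odd size has determinant zero. First I would fix a basis $(e_1,\dots,e_n)$ of~$\Lambda$ and let $M=(m_{ij})$ with $m_{ij}=e_i.e_j$ be the associated Gram matrix, so that $\disc\Lambda=\det M$ by definition. Because $\Lambda$ is even, each diagonal entry $m_{ii}=e_i^2$ is even, so $M$ is a symmetric matrix with even diagonal over~$\IZ$.

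The next step is to construct an auxiliary skew-symmetric integer matrix~$N$ that is congruent to~$M$ modulo~$2$. I would set $n_{ii}=0$, $n_{ij}=m_{ij}$ for $i<j$, and $n_{ji}=-m_{ij}$ for $i<j$. Since $-1\equiv 1\pmod{2}$ and the diagonal of~$M$ is even, checking entry by entry shows $N\equiv M\pmod{2}$, and therefore $\det N\equiv \det M\pmod{2}$. On the other hand, $N^{\top}=-N$, and since $n$ is odd,
\[\det N=\det N^{\top}=\det(-N)=(-1)^n\det N=-\det N,\]
which forces $2\det N=0$ and hence $\det N=0$ in~$\IZ$. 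Combining these two facts yields $\det M\equiv 0\pmod{2}$, i.e., $\disc\Lambda$ is even.

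I do not anticipate any genuine obstacle here: the argument is essentially forced once one observes that modulo~$2$, a symmetric matrix with vanishing diagonal can be lifted to a skew-symmetric integer matrix. The only point requiring care is the bookkeeping that this lift is congruent to~$M$ modulo~$2$, which is precisely where the evenness hypothesis enters in an essential way.
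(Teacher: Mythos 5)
Your proof is correct and follows essentially the same route as the paper: reduce the Gram matrix modulo~$2$ and use that an alternating/skew-symmetric form in odd dimension is degenerate. The only difference is that you re-derive this classical fact explicitly via a skew-symmetric integer lift, whereas the paper simply cites it for alternating forms over~$\IF_2$.
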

\begin{proof}
    Because the bilinear form on the lattice $\Lambda$ is even and symmetric, its base change to $\IF_2$ is an alternating bilinear form over $\IF_2$. Because every alternating bilinear form on a vector space of odd dimension is degenerate, this implies that $\disc \Lambda$ is even.
\end{proof}

\begin{corollary}\label{cor:eqL}
    Suppose that $\Lambda'$ is an even lattice of rank $3$ containing the lattice~$\Lambda$ of \autoref{lem:notnormal} as a sublattice. Then the lattice $\Lambda'$ equals $\Lambda$.
\end{corollary}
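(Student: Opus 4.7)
The plan is to use the discriminant formula from the beginning of Section \ref{s:genex} together with Lemma \ref{lem:evendisc} to force $[\Lambda':\Lambda]=1$. First I would compute the discriminant of $\Lambda$ directly from the Gram matrix
$$\begin{pmatrix} -4 & 3 & 3 \\ 3 & -2 & 0 \\ 3 & 0 & -2 \end{pmatrix},$$
expanding along the first row to obtain $\disc \Lambda = -4\cdot 4 - 3\cdot(-6) + 3\cdot 6 = 20$.

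Next, since $\Lambda'$ has rank $3$ and contains $\Lambda$ (also of rank $3$), the sublattice $\Lambda \subset \Lambda'$ has full rank. Applying the relation $\disc \Lambda = \disc \Lambda' \cdot [\Lambda':\Lambda]^2$ recalled in Section \ref{s:genex}, we get $20 = \disc \Lambda' \cdot [\Lambda':\Lambda]^2$. In particular $[\Lambda':\Lambda]^2$ is a square divisor of $20$, so $[\Lambda':\Lambda] \in \{1,2\}$.

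Finally, I would eliminate the case $[\Lambda':\Lambda] = 2$ as follows. In that case $\disc \Lambda' = 5$, which is odd. But by hypothesis $\Lambda'$ is an even lattice of odd rank $3$, so \autoref{lem:evendisc} forces $\disc \Lambda'$ to be even, a contradiction. Hence $[\Lambda':\Lambda] = 1$, and since $\Lambda \subset \Lambda'$ have the same rank and the same index relation, $\Lambda' = \Lambda$.

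There is essentially no obstacle here; the argument is a direct arithmetic consequence of the smallness of $\disc \Lambda = 20$ (only two perfect squares divide it) combined with the parity constraint supplied by \autoref{lem:evendisc}. The only thing to be careful about is that the discriminant relation I use is the one for a sublattice of full rank, which is exactly the situation $\Lambda \subset \Lambda'$ in rank $3$.
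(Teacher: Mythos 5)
Your proof is correct and follows essentially the same route as the paper: compute $\disc\Lambda=20$, use the full-rank sublattice discriminant relation to reduce to $\disc\Lambda'\in\{5,20\}$, and invoke \autoref{lem:evendisc} to rule out the odd value $5$. The only cosmetic difference is that you phrase the dichotomy in terms of the index $[\Lambda':\Lambda]\in\{1,2\}$ rather than directly in terms of the two possible discriminants, which is equivalent.
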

\begin{proof}
    First, we observe that the discriminant of $\Lambda$ equals $20=2^2\cdot 5$. Suppose that~$\Lambda$ is a sublattice of some lattice~$\Lambda'$ of rank $3$, then the discriminant of~$\Lambda'$ divides~$20$ and moreover, it differs by a square factor from~$20$. This implies that it is either~$5$ or~$20$. Because we assumed that $\Lambda'$ is even, it follows from \autoref{lem:evendisc} that it has to be $20$. This implies that $\Lambda$ equals $\Lambda'$.
\end{proof}

\section{Explicit examples over the rationals}\label{s:exQ}

We start this section by introducing some definitions and notation.
Let~$X$ be a scheme over $\Spec R$ where~$R$ denotes a commutative ring. Then for every ring homomorphism~$R\to S$, we denote by~$X_S$ the base change to~$S$, i.e., $X_S=X\times_{\Spec R} \Spec S$.

By a quartic in~$\IP^3_\IZ$ we mean a scheme of the form $\Proj \IZ[x,y,z,w]/(h)$, where $h\in\IZ[x,y,z,w]$ is a homogeneous polynomial of degree~4. If~$k$ is a field,~$X$ is a quartic in~$\IP^3_\IZ$, and the base change~$X_k$ is smooth, then~$X_k$ is a K3 surface.

Let~$Y$ be a K3 surface over a field~$k$ and let~$\overline{k}$ be an algebraic closure of~$k$. The \emph{geometric Picard number} of~$Y$ is defined as the Picard number of the base change~$Y_{\overline{k}}$, i.e., the number~$\rho(Y_{\overline{k}})$.

Our final objective is to construct explicit K3 surfaces over $\IQ$ with the properties of \autoref{prp:notnormal}. 
The following scheme will play an important role in this construction.
We define the scheme $C$ as
    \begin{align*}
        C:=\Proj&(\IZ[x,y,z,w]/(f_1,f_2,f_3))\subset \IP^3_\IZ, \; \text{ where}\\
        &f_1 := x^2-xy+6y^2;\\
        &f_2 := z^2-zw+6w^2;\\
        &f_3 := yz-xw.
    \end{align*}
Note that the roots of the polynomial $t^2-t+6$ in $\overline{\IQ}$ are given by $\beta_0:=\frac{1+\sqrt{-23}}{2}$ and $\beta_1:=\frac{1-\sqrt{-23}}{2}=1-\beta_0$. Define the quadratic number ring $R:=\IZ[\beta_0]$. Then the base change~$C_R$ is given by the union of the two schemes $L_0$ and $L_1$, where
\begin{align*}
    L_i:= \Proj(R[x,y,z,w]/(x-\beta_i y, z-\beta_i w)).
\end{align*}
Moreover, the base change of $C$ to the fraction field $Q(R)\subset \overline{\IQ}$ of~$R$ gives two skew lines that are conjugate over~$\IQ$.


\begin{lemma}\label{lem:qcC}
    Let $X$ be a quartic in $\IP_\IZ^3$ that contains $C$. If $X_\IQ$ is smooth and has geometric Picard number 3, then $\Pic X$ equals $(\Pic X^\text{s})^{\Gal(k^\text{s}/k)}$ and the image of~$\tR_X$ in~$\tO(\Pic X)$ is not normal in~$\tO(\Pic X)$.
\end{lemma}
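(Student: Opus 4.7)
The plan is to reduce to Proposition~\ref{prp:notnormal} by exhibiting $(H,L_0,L_1)$ as a basis of $\Pic X^\text{s}$, where $H$ is the hyperplane class and $L_0,L_1$ are the geometric components of $C_{\overline{\IQ}}$. Since $C$ is defined over $\IZ$ and splits over $R=\IZ[\beta_0]$ into the two lines $L_0,L_1$ that are interchanged by the Galois action on $\sqrt{-23}$, both classes lie in $\Pic X^\text{s}$ and are conjugate over the quadratic extension $\IQ(\sqrt{-23})$ of~$\IQ$.

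First, I would compute the intersection matrix of $(H,L_0,L_1)$. Because $X_\IQ$ is a smooth quartic, $H^2=4$; each $L_i$ is a line in $\IP^3$, so $H.L_i=1$; a direct parametrization of $L_0$ and $L_1$ (for example via $[\beta_i s:s:\beta_i t:t]$) shows that the two lines are disjoint, hence $L_0.L_1=0$; and by adjunction on a K3 surface, each smooth rational curve has self-intersection~$-2$, so $L_i^2=-2$. Thus the Gram matrix of $(H,L_0,L_1)$ is the one appearing in \autoref{prp:notnormal}, and a change of basis to $(D,L_0,L_1)$ with $D:=H-L_0-L_1$ gives precisely the Gram matrix of the lattice $\Lambda$ of \autoref{lem:notnormal}; in particular, the sublattice $\langle H,L_0,L_1\rangle$ of $\Pic X^\text{s}$ has discriminant~$20$.

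Next, I would promote this sublattice to all of $\Pic X^\text{s}$. Since $X_\IQ$ has geometric Picard number $3$ by hypothesis, the lattice $\Pic X^\text{s}$ has rank~$3$, so $\langle H,L_0,L_1\rangle \subset \Pic X^\text{s}$ is a sublattice of full rank, hence of finite index. The Picard lattice of a K3 surface is even, and it contains the lattice $\Lambda$ of \autoref{lem:notnormal} via the isomorphism above, so \autoref{cor:eqL} applies and forces $\Pic X^\text{s}=\langle H,L_0,L_1\rangle$. Thus $(H,L_0,L_1)$ is in fact a basis of $\Pic X^\text{s}$.

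Having verified that $X$ satisfies all the hypotheses of \autoref{prp:notnormal} (smooth quartic in $\IP^3_\IQ$, basis $(H,L_0,L_1)$ for $\Pic X^\text{s}$ with $H$ a hyperplane section and $L_0,L_1$ classes of two skew lines conjugate over a quadratic extension of~$\IQ$), the conclusion of that proposition gives both $\Pic X=(\Pic X^\text{s})^{\Gal(\IQ^\text{s}/\IQ)}$ and the non-normality of the image of $\tR_X$ in $\tO(\Pic X)$. The only step requiring genuine input is the Picard-lattice identification in the previous paragraph; the rest is the bookkeeping of intersection numbers, and the real work (an arithmetic non-extendability obstruction) has already been done inside \autoref{lem:notnormal}. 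The main remaining difficulty is therefore not in this lemma at all, but in exhibiting a concrete quartic $X$ over $\IQ$ for which the geometric Picard number really is~$3$; that construction is deferred to the explicit examples that follow.
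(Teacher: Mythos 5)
Your proof is correct and follows essentially the same route as the paper's: identify $\langle H,L_0,L_1\rangle$ with the lattice of \autoref{lem:notnormal}, use the Picard number hypothesis together with evenness of the K3 Picard lattice and \autoref{cor:eqL} to upgrade this to an equality $\Pic X^\text{s}=\langle H,L_0,L_1\rangle$, and then invoke \autoref{prp:notnormal}. You simply spell out the intersection-number bookkeeping (skewness, conjugacy over $\IQ(\sqrt{-23})$, the Gram matrix) that the paper leaves implicit.
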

\begin{proof}
    Note that we have $\langle H,L_{0,\overline{\IQ}},L_{1,\overline{\IQ}}\rangle\subset \Pic X_{\overline{\IQ}}$, where~$H$ denotes the hyperplane section. By the assumption that the geometric Picard number equals~$3$, the fact that~$H,L_{0,\overline{\IQ}},L_{1,\overline{\IQ}}$ are linearly independent, and \autoref{cor:eqL}, it follows that $\Pic X_{\overline{\IQ}}=\langle H,L_{0,\overline{\IQ}},L_{1,\overline{\IQ}}\rangle$. The result now follows from~\autoref{prp:notnormal}.
\end{proof}

Now our question reduces to finding a quartic $X$ satisfying the properties of \autoref{lem:qcC}. We expect that if we pick a general quartic containing~$C$, that~$X_\IQ$ has geometric Picard number~$3$. But showing that this equality holds for a concrete example, requires a bit more work. Therefore, we will follow the method described by one of the authors in \cite[Theorem 5.2.2]{vL05}. This entails in our situation the following.

\begin{proposition}\label{prp:redquartic}
    Let $X$ be a quartic in $\IP_\IZ^3$ that contains $C$. Suppose that  the reductions $X_{\IF_2}$ and $X_{\IF_3}$ are both smooth. Suppose furthermore that
    \begin{itemize}
        \item[(i)] the reduction $X_{\IF_2}$ contains the line $L_2$ given by $y=w=0$;
        \item[(ii)] the reduction $X_{\IF_3}$ contains the line $L_3$ given by $x=y=0$;
        \item[(iii)] the geometric Picard numbers of $X_{\IF_2}$ and $X_{\IF_3}$ are both equal to $4$.
    \end{itemize}
    Then $X_\IQ$ is smooth and has geometric Picard number $3$. 
\end{proposition}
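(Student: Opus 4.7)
The proposition is a direct application of the van Luijk method \cite[Theorem 5.2.2]{vL05} for bounding Picard ranks from above via reduction modulo two primes. I would organise the proof around four moving parts: (a) smoothness of $X_\IQ$, (b) a lower bound $\rho(X_{\overline{\IQ}})\geq 3$, (c) an upper bound $\rho(X_{\overline{\IQ}})\leq 4$ via specialization, and (d) ruling out $\rho(X_{\overline{\IQ}})=4$ by a discriminant obstruction in $\IQ^\times/(\IQ^\times)^2$. For (a), the smooth locus of a flat projective family is open on the base; since $X\to\Spec\IZ$ is projective, flat (the defining quartic polynomial is a non-zerodivisor in $\IZ[x,y,z,w]$) and at least one closed fibre is smooth, the smooth locus is a nonempty open subset of $\Spec\IZ$ and hence contains the generic point. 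For (b), the classes $H$, $L_{0,\overline{\IQ}}$, $L_{1,\overline{\IQ}}$ all lie in $\Pic X_{\overline{\IQ}}$, and their Gram matrix (computed in the proof of \autoref{prp:notnormal}) has nonzero discriminant $20$, so they are linearly independent.

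For (c), at each prime $p\in\{2,3\}$ of good reduction, the specialization map
$$\Pic X_{\overline{\IQ}} \hookrightarrow \Pic X_{\overline{\IF_p}}$$
is an injective lattice homomorphism, and hypothesis (iii) then yields $\rho(X_{\overline{\IQ}})\leq 4$. For (d), assume for contradiction that $\rho(X_{\overline{\IQ}})=4$. Then at each $p$ the image of the specialization map has finite index in $\Pic X_{\overline{\IF_p}}$, so $\disc \Pic X_{\overline{\IF_2}}$ and $\disc \Pic X_{\overline{\IF_3}}$ both equal $\disc \Pic X_{\overline{\IQ}}$ modulo squares, in particular they coincide modulo squares. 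Since $\rho(X_{\overline{\IF_p}})=4$ as well, the sublattice
$$M_p := \langle H,\, L_{0,\overline{\IF_p}},\, L_{1,\overline{\IF_p}},\, L_p \rangle \subset \Pic X_{\overline{\IF_p}}$$
has finite index as soon as its Gram matrix is non-degenerate, so $\disc M_p$ represents the same class as $\disc \Pic X_{\overline{\IF_p}}$ in $\IQ^\times/(\IQ^\times)^2$. It therefore suffices to show that $\disc M_2 \not\equiv \disc M_3 \pmod{(\IQ^\times)^2}$.

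The main obstacle is the computation of the four intersection numbers $L_{i,\overline{\IF_p}}.L_p$ for $i\in\{0,1\}$ and $p\in\{2,3\}$; every other entry of the Gram matrices of $M_2$ and $M_3$ is already fixed ($H^2=4$, $H.L_i = H.L_p = 1$, $L_i^2 = L_p^2 = -2$ by adjunction on a K3 surface, and $L_0.L_1 = 0$ because $L_0,L_1$ are skew). These four remaining entries are determined geometrically: one reduces the components $L_0$ and $L_1$ of $C$ modulo primes of $R=\IZ[\beta_0]$ lying over $2$ and $3$, studies the incidences of these reductions with the auxiliary lines $L_2\colon y=w=0$ in $X_{\overline{\IF_2}}$ and $L_3\colon x=y=0$ in $X_{\overline{\IF_3}}$, and, if necessary, resolves tangencies by local calculation. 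Once those four numbers are in hand, one computes the two $4\times 4$ determinants directly and verifies that $\disc M_2$ and $\disc M_3$ are inequivalent modulo squares, thereby contradicting the assumption $\rho(X_{\overline{\IQ}})=4$ and forcing $\rho(X_{\overline{\IQ}})=3$, as desired.
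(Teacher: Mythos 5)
Your outline follows exactly the same route as the paper: smoothness by openness of the smooth locus, the lower bound from $H,L_{0},L_{1}$, the upper bound from the specialization injections $\Pic X_{\overline{\IQ}}\hookrightarrow\Pic X_{\overline{\IF}_p}$, and the square-class comparison of discriminants at $p=2$ and $p=3$. However, you stop exactly at the step on which the whole proposition hinges: you announce that one ``verifies that $\disc M_2\not\equiv\disc M_3 \pmod{(\IQ^\times)^2}$'' without computing the four intersection numbers $L_{i,\overline{\IF}_p}.L_p$ or the two determinants. If those determinants happened to lie in the same square class, the argument would prove nothing, so this verification is the substance of the proof, not a routine afterthought. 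For the record, the computation is short: choosing the prime of $R=\IZ[\beta_0]$ over $p$ so that $\beta_0\mapsto 0$ and $\beta_1\mapsto 1$, the reductions are $L_{0,p}\colon x=z=0$ and $L_{1,p}\colon x=y,\ z=w$. Over $\IF_2$ the line $L_2\colon y=w=0$ meets neither (each intersection forces $x=y=z=w=0$), so $L_{0,2}.L_2=L_{1,2}.L_2=0$ and $\disc M_2=44=2^2\cdot 11$; over $\IF_3$ the line $L_3\colon x=y=0$ meets $L_{0,3}$ in $[0:0:0:1]$ and $L_{1,3}$ in $[0:0:1:1]$, transversally, so $L_{0,3}.L_3=L_{1,3}.L_3=1$ and $\disc M_3=36=6^2$. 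Since $11$ and $1$ are distinct in $\IQ^\times/(\IQ^\times)^2$, the contradiction you set up does go through. With that computation supplied, your proof is complete and coincides with the paper's.
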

\begin{proof}
    Take $p\in\{2,3\}$. The fact that~$X_\IQ$ is smooth follows directly from the fact that~$X_{\IF_p}$ is smooth, so we are left to prove that~$\rho(X_{\overline{\IQ}})$ equals~$3$.

    Let~$H_p$ denote the hyperplane section on~$X_{\IF_p}$. The curve~$C_{\IF_p}$ is the union of two skew lines. If we choose a prime $\mathfrak{p}\subset R$ above $p$, then $R/\mathfrak{p}=\IF_p$ and the skew lines are exactly the base changes $L_{0,p}$ and $L_{1,p}$ with $L_{i,p}:=(L_i)_{\IF_p}$.
    Moreover, we may choose $\mathfrak{p}$ such that~$\beta_0$ reduces to~$0$ and $\beta_1$ reduces to~$1$. Then~$L_{0,p}$ is the intersection of the hyperplanes given by the relation $x=z=0$, and the other line $L_{1,p}$ is given by the equations $x=y$ and $z=w$.

    First, we take a closer look at the intersection form on $X_{\IF_2}$. We have that the intersection form on the subgroup generated by $H_2,L_{0,2},L_{1,2},L_2$ is given by the Gram matrix
    \begin{align*}
    \begin{pmatrix}
        4 & 1 & 1 & 1 \\
        1 & -2 & 0 & 0 \\
        1 & 0 & -2 & 0 \\
        1 & 0 & 0 & -2
    \end{pmatrix}.
    \end{align*}
    Note that the determinant of this matrix equals $44=2^2\cdot 11$, and by the assumption that  $\rho(X_{\overline{\IF}_2})=4$, we deduce that $H_2,L_{0,2},L_{1,2},L_2$ generate a sublattice of full rank of the Picard lattice of $X_{\overline{\IF}_2}$. It follows that the discriminant of $\Pic X_{\overline{\IF}_2}$ equals $11$ or $44$.

    For $X_{\IF_3}$, we deduce similarly that $H_3,L_{0,3},L_{1,3},L_3$ generate a sublattice of full rank of the Picard lattice of $X_{\overline{\IF}_3}$, where the intersection form is given by the Gram matrix
    \begin{align*}
    \begin{pmatrix}
        4 & 1 & 1 & 1 \\
        1 & -2 & 0 & 1 \\
        1 & 0 & -2 & 1 \\
        1 & 1 & 1 & -2
    \end{pmatrix}.
    \end{align*}
    The determinant of this matrix equals $36=2^2\cdot 9$, which means that the discriminant of $\Pic X_{\overline{\IF}_3}$ equals $9$ or $36$.
    It follows that the discriminants of~$\Pic X_{\overline{\IF}_2}$ and $\Pic X_{\overline{\IF}_3}$ do not differ by a rational square factor.

    By reduction, we have inclusions $\Pic X_{\overline{\IQ}} \hookrightarrow \Pic X_{\overline{\IF}_p}$ for both $p=2,3$, see \cite[Proposition~2.6.2]{vL05}. 
    So the lattice $\Pic X_{\overline{\IQ}}$ can be identified as a sublattice of both. 
    Now $\Pic X_{\overline{\IQ}}$ can only be a sublattice of rank~4 if the discriminant of this lattice $\Pic X_{\overline{\IQ}}$ differs by a square factor from the discriminant of both lattices $\Pic X_{\overline{\IF}_2}$ and $\Pic X_{\overline{\IF}_3}$. 
    But because these two lattices do not differ by a square factor, it follows that $\rho(X_{\overline{\IQ}})<4$.
    Because we know that $X_{\overline{\IQ}}$ contains the lines~$L_{0,\overline{\IQ}}$ and~$L_{1,\overline{\IQ}}$ and the hyperplane section, the geometric Picard number is at least~$3$, and hence we have the equality $\rho(X_{\overline{\IQ}})=3$. 
\end{proof}

To construct an example that satisfies the properties of the above proposition, we searched for smooth surfaces over $\IF_2$ and $\IF_3$ respectively with the given properties. Using \textsc{Magma}, we found the examples described in~\autoref{example}. For details of this search, we refer to the
\href{https://github.com/wimnijgh/K3-and-Weyl-group}{GitHub repository} of this paper. 

\begin{theorem}\label{example}
    Let $X\subset \IP^3_\IZ$ be the quartic given by the zero set of the polynomial~$h\in \IZ[x,y,z,w]$ with $h:=q_1f_1+q_2f_2+q_3f_3$, where $q_1,q_2,q_3\in\IZ[x,y,z,w]$ are homogeneous polynomials of degree~2 such that
    \begin{align*}
        q_1 &\equiv y^2 + xw + xy + yw \pmod{2}, \\
        q_2 &\equiv xy + yz + yw + zw \pmod{2}, \\
        q_3 &\equiv xz + xw + z^2 + w^2 + y^2 \pmod{2},
    \end{align*}
    and
    \begin{align*}
        q_1 &\equiv y^2 + xw + 2xz + 2yz \pmod{3}, \\
        q_2 &\equiv xy + yz + yw + x^2 + 2 xw \pmod{3}, \\
        q_3 &\equiv xz + xw + z^2 + w^2 + 2xy + 2yw \pmod{3}.
    \end{align*}
    Then $X_\IQ$ is a K3 surface with $\Pic X_\IQ=(\Pic X_{\overline{\IQ}})^{\text{Gal}(\overline{\IQ}/\IQ)}$ and the image of~$\tR_{X_\IQ}$ in~$\tO(\Pic X_\IQ)$ is not normal.
\end{theorem}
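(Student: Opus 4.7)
The plan is to verify the hypotheses of \autoref{prp:redquartic} for the quartic $X$ defined above and then invoke \autoref{lem:qcC}. Concretely, four items require checking: that $X_{\IF_2}$ and $X_{\IF_3}$ are smooth, that $L_2=\{y=w=0\}$ lies in $X_{\IF_2}$, that $L_3=\{x=y=0\}$ lies in $X_{\IF_3}$, and that $\rho(X_{\overline{\IF}_p})=4$ for $p\in\{2,3\}$.

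The two line-containments are essentially built into the prescribed congruences on the $q_i$. Modulo $(y,w)$ one has $f_1\equiv x^2$, $f_2\equiv z^2$, and $f_3\equiv 0$, while the mod-$2$ reductions of $q_1$ and $q_2$ both lie in the ideal $(y,w)$; hence $h\equiv 0\pmod{(y,w,2)}$, so $L_2\subset X_{\IF_2}$. Similarly, modulo $(x,y)$ only $f_2$ survives, and the prescribed mod-$3$ reduction of $q_2$ lies in $(x,y)$, so $h\equiv 0\pmod{(x,y,3)}$ and $L_3\subset X_{\IF_3}$. Smoothness of each $X_{\IF_p}$ reduces to showing that the Jacobian ideal of $h\bmod p$ has no projective zero, which is a finite Gr\"obner-basis check handled in \textsc{Magma}.

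The main obstacle is the Picard-number equality at the two reductions. The lower bound $\rho(X_{\overline{\IF}_p})\geq 4$ is immediate: the Gram matrices exhibited in the proof of \autoref{prp:redquartic} have determinants $44$ and $36$ respectively, so in each case the four classes $H_p$, $L_{0,p}$, $L_{1,p}$ and the auxiliary line are independent in $\Pic X_{\overline{\IF}_p}$. For the matching upper bound I would follow the approach of \cite[\S\S 2,5]{vL05}: compute $\#X(\IF_{p^n})$ for $n=1,\dots,10$ in \textsc{Magma}, use the Lefschetz trace formula together with Poincar\'e duality and the functional equation for K3 surfaces to recover the characteristic polynomial $\chi_p(t)$ of Frobenius acting on the second $\ell$-adic cohomology of $X_{\overline{\IF}_p}$, and then bound $\rho(X_{\overline{\IF}_p})$ above by the number of roots of $\chi_p(t)$ of the form $p\cdot\zeta$ with $\zeta$ a root of unity. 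The explicit choice of $q_1,q_2,q_3$ recorded in the repository is made precisely so that exactly four such roots appear at both primes; verifying this count is the substantive computational step of the proof and is where any parity or Artin--Tate refinement would enter if the raw eigenvalue count left ambiguity.

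Once these four hypotheses are in place, \autoref{prp:redquartic} yields that $X_\IQ$ is smooth with geometric Picard number $3$, and \autoref{lem:qcC} then immediately produces both the equality $\Pic X_\IQ=(\Pic X_{\overline{\IQ}})^{\Gal(\overline{\IQ}/\IQ)}$ and the non-normality of the image of $\tR_{X_\IQ}$ in $\tO(\Pic X_\IQ)$, finishing the proof.
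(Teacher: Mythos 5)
Your proposal is correct and follows essentially the same route as the paper: verify the hypotheses of \autoref{prp:redquartic} (line containments from the congruences on the $q_i$, smoothness by direct computation, and the Picard-number bound via point counts, the Lefschetz trace formula, and the functional equation for the Frobenius characteristic polynomial) and then conclude by \autoref{lem:qcC}. The only cosmetic differences are that the paper counts points only up to $n=9$ and pins down the sign of the functional equation from $c_9\neq 0$, and works with the Tate-twisted cohomology so the relevant eigenvalues are roots of unity rather than of the form $p\zeta$.
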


\begin{proof}
    We will show that $X$ satisfies the properties of \autoref{prp:redquartic}. Then the result will follow from \autoref{lem:qcC}.

    From the definition of $h$, it is clear that the surface~$X$ contains the curve~$C$. Furthermore, a direct verification shows that the reductions~$X_{\IF_2}$ and~$X_{\IF_3}$ are smooth.
    The reduction of the polynomials~$q_1,q_2,f_3$ to~$\IF_2[x,y,z,w]$ are contained in the ideal~$(y,w)$, hence~$X_{\IF_2}$ contains the line~$L_2$ described as in \autoref{prp:redquartic}. Similarly, the reduction of the polynomials~$f_1,q_2,f_3$ to~$\IF_3[x,y,z,w]$ are contained in the ideal~$(x,y)$ and so~$X_{\IF_3}$ contains the line~$L_3$.
    Hence, we are left to prove that the geometric Picard numbers of~$X_{\IF_2}$ and~$X_{\IF_3}$ are both~4. We follow the method used in the proof of \cite[Theorem 5.2.2]{vL05}.
    
    Take $p\in\{2,3\}$ and denote by $\Phi_p$ the absolute Frobenius on~$X_{\IF_p}$, i.e., it is the identity map on~$X_{\IF_p}$ and on the structure sheaf it sends an element~$g$ to~$g^p$. Let~$\overline{\Phi}_p^*$ denote the automorphism on the second \'etale cohomology group~$H^2_\text{\'et}(X_{\overline{\IF}_p},\IQ_\ell(1))$ induced by the base change $\overline{\Phi}_p\colon X_{\overline{\IF}_p}\to X_{\overline{\IF}_p}$.

    The Picard group of $X_{\overline{\IF}_p}$ embeds in the space $H^2_\text{\'et}(X_{\overline{\IF}_p},\IQ_\ell(1))$ and this embedding is Galois-invariant, see \cite[Proposition 2.6.2]{vL05}.
    Because all classes in the geometric Picard group can already be defined over some finite extension of~$\IF_p$ and~$\Pic X_{\IF_p}$ is finitely generated, there is some power of Frobenius which acts trivially on this group, and hence also on the whole image of~$\Pic X_{\overline{\IF}_p}$ in~$H^2_\text{\'et}(X_{\overline{\IF}_p},\IQ_\ell(1))$. 
    This shows that the number of eigenvalues of~$\overline{\Phi}_p^*$ which are roots of unity, counted with multiplicity, is an upper bound of the geometric Picard number $\rho(X_{\overline{\IF}_p})$ of~$X_{\IF_p}$. 
    (In fact, this upper bound equals the geometric Picard number of~$X_3$, because this is implied by the Tate conjecture which is proven for K3 surfaces, see \cite{Pera15} and \cite{Pera16}.)

    To find the eigenvalues, we want to calculate the characteristic polynomial of~$\overline{\Phi}_p^*$, which we will denote by~$f_p$, and continue as follows. Let $V$ be the quotient space $V:=H^2_\text{\'et}(X_{\overline{\IF}_p},\IQ_\ell(1))/U$, where $U$ denotes the subspace generated by the images of the classes of~$H$, $(L_0)_{\IF_p}$, $(L_1)_{\IF_p}$ and $L_p$. Because $\overline{\Phi}_p^*$ leaves all elements of~$U$ invariant, we get an induced automorphism $\varphi_p\colon V\to V$.
    Moreover, we have the relation 
    \begin{equation*}
        f_p(t)=(t-1)^4\cdot f_{\varphi_p}(t).
    \end{equation*}
    where~$f_{\varphi_p}$ denotes the characteristic polynomial of $\varphi_p$. 
    
    Note that $\dim V=\dim H^2_\text{\'et}(X_{\overline{\IF}_p},\IQ_\ell(1))-\dim U=22-4=18$. 
    So the characteristic polynomial of $\varphi_p$ is a degree 18 polynomial, which is equal to
    $$f_{\varphi_p}(t)=t^{18}+c_1t^{17}+\dots + c_{18},$$
    where $c_1=-\Tr(\varphi_p)$ and the other~$c_i$ are given recursively by 
        \begin{equation*}
            c_i=-\frac{\Tr(\varphi_p^i)+\sum_{j=1}^{i-1}c_j\Tr(\varphi_p^{i-j})}{i},
        \end{equation*}
    see \cite[Lemma 5.2.1]{vL05}.
    Using the Lefschetz trace formula and the Weil conjectures, together with the fact that $\Tr(\varphi_p^n)=\Tr({\overline{\Phi}_p^*}^n)-4$, we can deduce the equality
    \begin{equation*}\label{eq:trace}
        \Tr(\varphi_p^n)=\frac{\#X_{\IF_p}(\IF_{p^n})-1-p^{2n}-4p^n}{p^n}.
    \end{equation*}
    
    Using \textsc{Magma}, we count the points on $X_{\IF_p}$ over the field extensions~$\IF_{p^n}$ for $1\leq n\leq 9$. The outcome is listed in the table below.
    \begin{center}\Small
    \begin{tabular}{r|lllllllll}
        $n$              & 1 & 2 & 3 & 4 & 5 & 6 & 7 & 8 & 9 \\ \hline
        $\#X_2(\IF_{2^n})$ & 9 & 33 & 105 & 257 & 1249 & 4161 & 17089 & 65537 & 264705 \\
        $\#X_3(\IF_{3^n})$ & 21 & 95 & 819 & 6983 & 59406 & 534179 & 4804401 & 43025999 & 387536184
    \end{tabular}
    \end{center}
    This gives us the following first nine coefficients of $f_{\varphi_p}$.
    \begin{center}\small 
    \begin{tabular}{r|lllllllll}
        & $c_1$ & $c_2$ & $c_3$ & $c_4$ & $c_5$ & $c_6$ & $c_7$ & $c_8$ & $c_9$ \\ \hline
        $p=2$ & $2$ & $2$ & $1$ & $1$ & $1$ & $1$ & $\tfrac{1}{2}$ & $1$ & $1$ \\
        $p=3$ & $\tfrac{1}{3}$ & $\tfrac{4}{3}$ & $\tfrac{2}{3}$ & $\tfrac{2}{3}$ & $1$ & $\tfrac{1}{3}$ & $0$ & $1$ & $-\tfrac{2}{3}$
    \end{tabular}
    \end{center}
    The Weil conjectures give the functional equation $f_p(t)=\pm t^{22}f_p(1/t)$. On both sides we can divide by $(t-1)^4$ giving the relation $f_{\varphi_p}(t)=\pm t^{18} f_{\varphi_p}(1/t)$.
    For both $p$, we have that $c_9\neq 0$, and hence it follows that the sign of the functional equation should be positive. This gives the relation $c_{18-i}=c_i$ for all $1\leq i\leq 9$. Because~$f_p$ is monic, it follows that $c_{18}=1$.
    We deduce that
    \begin{align*}
     \begin{split}
        f_2(t)=(t-1)^4 (t^{18} + &2t^{17} + 2t^{16} + t^{15} + t^{14} + t^{13} + t^{12} + \tfrac{1}{2}t^{11} + t^{10} + \\
         & t^9 + t^8 + \tfrac{1}{2}t^7 + t^6 + t^5 + t^4 + t^3 + 
        2t^2 + 2t + 1);
     \end{split} \\
     \begin{split}
        f_3(t)=(t-1)^4(t^{18} + &\tfrac{1}{3}t^{17} + \tfrac{4}{3}t^{16} + \tfrac{2}{3}t^{15} + \tfrac{2}{3}t^{14} + t^{13} + \tfrac{1}{3}t^{12} + t^{10} - \\
        & \tfrac{2}{3}t^9 + t^8 + \tfrac{1}{3}t^6 + t^5 + \tfrac{2}{3}t^4 + \tfrac{2}{3}t^3 + \tfrac{4}{3}t^2 + \tfrac{1}{3}t + 1).
     \end{split}
    \end{align*}
    
    Now let $\phi$ denote the Euler-phi-function, i.e.,~$\phi(n)=\#(\IZ/n\IZ)^*$. One can verify that if~$\phi(n)\leq 18$, we have that~$n\leq 60$. For every~$n^\text{th}$-root of unity~$\zeta$ with~$1\leq n\leq 60$, we can check that $f_{\varphi_p}(\zeta)\neq 0$ and so it is not a zero of $f_{\varphi_p}$. Hence, we deduce that~$f_p$ has no other eigenvalues which are a root of unity, except for the eigenvalue~$1$ which has multiplicity~$4$. It follows that~$4$ is an upper bound for the geometric Picard number of the surface~$X_{\IF_p}$. 

    Recall that~$X$ is constructed in such a way that~$X_{\IF_p}$ has at least four linearly independent divisors. 
    It follows that $\rho(X_{\overline{\IF}_p})=4$ and hence we deduce that~$X_\IQ$ satisfies the properties of \autoref{lem:qcC}. 
    We conclude that~$\Pic X_\IQ$ equals $(\Pic X_{\overline{\IQ}})^{\text{Gal}(\overline{\IQ}/\IQ)}$ and that the image of~$\tR_{X_\IQ}$ in~$\tO(\Pic X_\IQ)$ is not normal.
\end{proof}

\begin{corollary}
    The answer to \autoref{question} is no.
\end{corollary}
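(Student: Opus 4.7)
The plan is to verify that $X$ satisfies the three hypotheses of \autoref{prp:redquartic}; the conclusion will then follow immediately from \autoref{lem:qcC}. That $X$ contains $C$ is built into the presentation $h = q_1 f_1 + q_2 f_2 + q_3 f_3$, since $h$ lies in the ideal $(f_1, f_2, f_3)$ cutting out $C$. Smoothness of $X_{\IF_2}$ and $X_{\IF_3}$ is verified by a direct Jacobian computation on the explicit reductions. For the line containment, one observes that modulo $2$ each of $q_1$, $q_2$, $f_3$ lies in $(y, w)$, while modulo $3$ each of $f_1$, $q_2$, $f_3$ lies in $(x, y)$; together with $h = q_1 f_1 + q_2 f_2 + q_3 f_3$ this forces $h \bmod 2 \in (y, w)$ and $h \bmod 3 \in (x, y)$, whence $L_2 \subset X_{\IF_2}$ and $L_3 \subset X_{\IF_3}$.

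The main obstacle is showing that the geometric Picard numbers $\rho(X_{\overline{\IF}_2})$ and $\rho(X_{\overline{\IF}_3})$ are both exactly $4$. The classes $H$, $L_{0,\IF_p}$, $L_{1,\IF_p}$, $L_p$ already span a sublattice of rank four, so only the upper bound $\rho(X_{\overline{\IF}_p}) \leq 4$ requires real work. I would follow the Frobenius-eigenvalue method of \cite[Theorem 5.2.2]{vL05}: since $\Pic X_{\overline{\IF}_p}$ embeds Galois-equivariantly into $H^2_{\text{\'et}}(X_{\overline{\IF}_p}, \IQ_\ell(1))$, the number of eigenvalues of the geometric Frobenius $\overline{\Phi}_p^*$ that are roots of unity, counted with multiplicity, bounds $\rho(X_{\overline{\IF}_p})$ from above. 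Passing to the quotient $V$ of $H^2_{\text{\'et}}$ by the Frobenius-invariant four-dimensional subspace $U$ spanned by the four known classes produces an induced operator $\varphi_p$ whose characteristic polynomial $f_{\varphi_p}$ satisfies $f_p(t) = (t-1)^4 f_{\varphi_p}(t)$, so it suffices to show that none of the eighteen roots of $f_{\varphi_p}$ is a root of unity.

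To pin down $f_{\varphi_p}$, I would apply the Lefschetz trace formula to the K3 surface $X_{\IF_p}$, which yields $\Tr(\varphi_p^n) = (\#X_{\IF_p}(\IF_{p^n}) - 1 - p^{2n} - 4 p^n)/p^n$. Counting points in \textsc{Magma} for $1 \leq n \leq 9$ and applying Newton's identities recovers the first nine coefficients $c_1, \dots, c_9$ of $f_{\varphi_p}$; the Weil functional equation $f_{\varphi_p}(t) = \pm t^{18} f_{\varphi_p}(1/t)$, with sign forced positive because $c_9 \neq 0$, then determines the remaining coefficients via $c_{18 - i} = c_i$ and $c_{18} = 1$. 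Any root-of-unity eigenvalue must have order $n$ with $\phi(n) \leq 18$, hence $n \leq 60$; a direct evaluation check that $f_{\varphi_p}(\zeta) \neq 0$ for every primitive $n$-th root of unity with $n \leq 60$ completes the upper bound. Combining with the lower bound yields $\rho(X_{\overline{\IF}_p}) = 4$ for $p \in \{2,3\}$, and an appeal to \autoref{prp:redquartic} and \autoref{lem:qcC} finishes the proof.
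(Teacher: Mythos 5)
Your proposal is correct and follows essentially the same route as the paper: it reproduces the paper's proof of \autoref{example} (verification of the hypotheses of \autoref{prp:redquartic} via the Frobenius-eigenvalue method of van Luijk, point counts over $\IF_{p^n}$ for $n\leq 9$, Newton's identities, and the Weil functional equation), from which the corollary follows. The only step you leave implicit is the one the paper's own proof of the corollary actually records, namely that polynomials $q_1,q_2,q_3$ with the prescribed reductions modulo $2$ and $3$ exist by the Chinese remainder theorem.
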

\begin{proof}
    One can construct counterexamples by using the Chinese remainder theorem on the polynomials given in \autoref{example}.
\end{proof}

\bibliographystyle{amsplain}
\bibliography{references}

\end{document}